\newtheorem{theorem}{Theorem}[section]
\newtheorem{lemma}[theorem]{Lemma}
\renewcommand{\leq}{\leqslant}
\renewcommand{\geq}{\geqslant}
\theoremstyle{definition}
\theoremstyle{definition}
\numberwithin{equation}{section}
\numberwithin{equation}{section} \numberwithin{figure}{section}
\begin{document}
\title{Degenerations  of Ricci-flat  Calabi-Yau manifolds }
\author[X.Rong]{Xiaochun Rong * }
\thanks{*Supported partially  by NSF Grant DMS-1106517, and by research fund from Capital Normal
University.  \\ Address: Mathematics Department, Capital Normal
University, Beijing 100048, P.R.China, and  Mathematics Department,
Rutgers University New Brunswick, NJ 08903, USA.  E-mail address:
rong@math.rutgers.edu}
\author[Y. Zhang]{Yuguang Zhang **}
\thanks{**Supported by  NSFC-10901111.  \\ Address:   Mathematics Center and  Department,  Tsinghua 
University, Beijing 100084, P.R.China.   E-mail address:
 yuguangzhang76@yahoo.com}

\begin{abstract}
 This  paper is a sequel  to  \cite{RoZ}. We further investigate  the Gromov-Hausdorff convergence of  Ricci-flat K\"{a}hler metrics   under degenerations of Calabi-Yau manifolds.  We extend Theorem 1.1  in  \cite{RoZ} by removing the condition on existence of  crepant resolutions for Calabi-Yau varieties.

\end{abstract}

\maketitle
\section{Introduction}\label{sect0}\vskip 2mm

A Calabi-Yau manifold $M$ is a   projective
manifold with trivial canonical bundle $ \mathcal{K}_{M}\cong
\mathcal{O}_{M}$. Yau's theorem for  Calabi conjecture
(cf. \cite{Ya1})  asserts that, for
    any K\"{a}hler class $\alpha\in H^{1,1}(M, \mathbb{R})$,
there exists a
     unique Ricci-flat K\"{a}hler metric $g$ on $M$ whose  K\"{a}hler form
     $\omega$ represents $\alpha$. The metric behavior of Ricci-flat    Calabi-Yau manifolds is  studied by various authors from many perspectives  (cf.  \cite{An0}, \cite{CT}, \cite{GTZ},  \cite{GW2}, \cite{Kob1},  \cite{Lu1},  \cite{RoZ},  \cite{RZ}, \cite{Ti}, \cite{To},  \cite{To2},  \cite{Song}, \cite{Wi}). The present paper is a sequel  to  \cite{RoZ}, and we further investigate
   the Gromov-Hausdorff convergence of  Ricci-flat K\"{a}hler metrics   under degenerations of Calabi-Yau manifolds.

   The moduli space $\mathfrak{M}$ of a polarized Calabi-Yau manifold $(M, L)$, i.e. a Calabi-Yau manifold $M$ with an ample line bundle $L$, exists, and  is a quai-projective variety (cf.  \cite{Vie}).  Yau's theorem can be viewed as a map $$\mathcal{CY}: \mathfrak{M} \longrightarrow \mathcal{X} \ \ \ {\rm by} \ \ (M, L)\mapsto (M, g_{L}), $$ where $g_{L}$ is a Ricci-flat K\"{a}hler metric on $M$ whose K\"{a}hler class represents $c_{1}(L)$,  and $\mathcal{X}$ denotes the space of isometric classes of compact metric spaces with Gromov-Hausdorff topology.  An interesting  question is to understand the relationship between  the closure  of $\mathcal{CY}( \mathfrak{M})$ in $\mathcal{X}$ and the natural  algebro-geometric compactification  of $ \mathfrak{M}$.

 Assume that   $\mathcal{M}$ is an
$(n+1)$-dimensional variety, $\pi: \mathcal{M} \rightarrow \Delta$ is a proper flat morphism from $\mathcal{M}$ to a disc $\Delta\subset\mathbb{C}$,
$M_{0}=\pi^{-1}(0)$ is a singular projective variety,  $M_{t}=\pi^{-1}(t)$ is a smooth projective
$n$-dimensional manifold for any $t\in \Delta\backslash \{0\}$, and the relative canonical bundle $ \mathcal{K}_{ \mathcal{M}/\Delta}$ is trivial on $ \mathcal{M}\backslash M_{0}$. Then for any $t\in \Delta\backslash \{0\}$  $M_{t}=\pi^{-1}(t)$ is a Calabi-Yau
$n$-manifold, and $(\mathcal{M}, \pi)$ is called a {\it degeneration }  of Calabi-Yau manifolds over the disc $\Delta$. If $ \mathcal{L}$ is  an ample line bundle on  $ \mathcal{M}$ whose differential type is independent of $t$, then there is a map from $\Delta\backslash\{0\}$ to
the moduli space $\mathfrak{M}$ of $(M_{t}, \mathcal{L}|_{M_{t}})$,  and $0\in \Delta$ corresponds to a point in the algebro-geometric compactification    of  $\mathfrak{M}$.
A Calabi-Yau variety is a
  projective Gorenstein  normal  variety $M_{0}$ with  trivial canonical
sheaf $\mathcal{K}_{M_{0}}\cong \mathcal{O}_{M_{0}}$ and only
canonical singularities.

 In  this paper we   study  the Gromov-Hausdorff limit of  $\mathcal{CY}((M_{t}, \mathcal{L}|_{M_{t}}) )$ when $0\in \Delta$ represents a Calabi-Yau variety. The main result is:

\vskip 3mm
\begin{theorem}\label{t1.1}
 Let  $(\mathcal{M}, \pi)$ be a degeneration of Calabi-Yau manifolds over a disc $\Delta\subset \mathbb{C}$, and let $\mathcal{L}$ be an ample line bundle on $ \mathcal{M}$.
 Assume that   $M_{0}$ is a Calabi-Yau
$n$-variety ($n\geq 2$)   with singular set $S$, and    the
relative canonical bundle $\mathcal{K}_{\mathcal{M}/\Delta}$ of  $(\mathcal{M}, \pi)$ is trivial, i.e.,
$\mathcal{K}_{\mathcal{M}/\Delta}\cong \mathcal{O}_{\mathcal{M}}$.
 If  $\tilde{g}_{t}$ denotes   the unique Ricci-flat  K\"{a}hler  metric
with K\"{a}hler form $\tilde{\omega}_{t}\in
c_{1}(\mathcal{L})|_{M_{t}}\in H^{1,1}(M_{t}, \mathbb{R})$,  $t\in
\Delta\backslash \{0\}$, then   $$(M_{t}, \tilde{g}_{t})
\stackrel{d_{GH}}\longrightarrow (X, d_{X}),  $$ when $t\rightarrow 0$, in the Gromov-Hausdorff sense, where  $(X, d_{X}) $ denotes   the metric completion of
   $(M_{0}\backslash S,d_{g}) $,  $g$ is a Ricci-flat
  K\"{a}hler metric on $M_{0}\backslash S$, and $d_{g}$ is the
  Riemannian distance function of $g$.   Furthermore,   $X \backslash (M_{0}\backslash S)$  is a closed subset of
    Hausdorff dimension  less or equal to $2n-2$, and any tangent cone $T_{x}X$, $x\in X \backslash (M_{0}\backslash S)$, is not $\mathbb{R}^{2n}$.
\end{theorem}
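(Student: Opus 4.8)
The plan is to combine a priori estimates for the complex Monge--Amp\`ere equations that define the $\ti{g}_t$ with the structure theory of Gromov--Hausdorff limits under two-sided Ricci bounds. Since $\mathcal{K}_{\mathcal{M}/\Delta}\cong\O_{\mathcal{M}}$, a nowhere-vanishing relative holomorphic volume form restricts to holomorphic volume forms $\Omega_t$ on each $M_t$ and to a form $\Omega_0$ on $M_0\backslash S$; being Ricci-flat, $\ti\omega_t$ then solves $\ti\omega_t^n=c_t\,\Omega_t\wedge\ov{\Omega_t}$ for a normalizing constant $c_t$ fixed by $\Vol(M_t,\ti g_t)=\tfrac{1}{n!}(c_1(\mathcal{L})|_{M_t})^n$, a cohomological number independent of $t$. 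Writing $\ti\omega_t=\omega_t+\ddbar\vp_t$, where the reference Kähler metrics $\omega_t\in c_1(\mathcal{L})|_{M_t}$ are induced by a fixed positively curved metric on the ample bundle $\mathcal{L}$, the first step is a uniform $C^0$ bound on $\vp_t$. Because $M_0$ has only canonical singularities, $\Omega_0\wedge\ov{\Omega_0}$ has finite total mass and lies in $L^p$, $p>1$, against a fixed smooth volume form on $\mathcal{M}$, and the masses $\int_{M_t}\Omega_t\wedge\ov{\Omega_t}$ are uniformly bounded as $t\to 0$. Kolodziej's and Eyssidieux--Guedj--Zeriahi's pluripotential estimates then give $\|\vp_t\|_{C^0}\leq C$ uniformly in $t$; the essential point is that these estimates depend only on the $L^p$ norm of the right-hand side, run intrinsically on $M_0$, and do \emph{not} require a resolution. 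This is precisely where the crepant-resolution hypothesis of \cite{RoZ} is removed.

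On any compact $K\subset M_0\backslash S$ the equation is uniformly elliptic away from $t=0$: both $\omega_t$ and $\Omega_t\wedge\ov{\Omega_t}$ converge smoothly on $K$, so the $C^0$ bound feeds the standard second-order (Yau) estimate together with Evans--Krylov and Schauder theory to produce uniform $C^k(K)$ bounds. Hence $\ti g_t\to g$ in $C^\infty_{loc}(M_0\backslash S)$, where $g$ is the smooth Ricci-flat Kähler metric on $M_0\backslash S$ associated by \cite{Ya1} and its singular version to the unique Ricci-flat current on $M_0$.

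Next I would establish a uniform diameter upper bound and a uniform (non-collapsing) volume lower bound for $(M_t,\ti g_t)$. The volume is the fixed number above; the diameter bound follows by combining the uniform $C^0$ estimate with a Moser iteration/Sobolev argument that controls distances in terms of the potential, as carried out in \cite{RoZ}. With $\Ric\equiv 0$ and uniform diameter and volume, Gromov precompactness and Cheeger--Colding theory give subsequential convergence $(M_t,\ti g_t)\stackrel{d_{GH}}\longrightarrow(X,d_X)$ to a metric space $X$ with a regular/singular decomposition. The $C^\infty_{loc}$ convergence identifies the regular part of $X$ isometrically with $(M_0\backslash S,g)$, and a capacity/measure estimate shows that the complementary set carries no volume, so $(X,d_X)$ is the metric completion of $(M_0\backslash S,d_g)$ and the limit is independent of the subsequence.

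Finally, $X$ is a non-collapsed limit of manifolds with $\Ric\equiv 0$, so the Cheeger--Colding--Tian structure theory, in which the two-sided Ricci bound upgrades the generic codimension-$2$ estimate to codimension $4$, yields that $X\backslash(M_0\backslash S)$ has Hausdorff dimension $\leq 2n-4$; and at every such $x$ no tangent cone is $\RR^{2n}$, for if $T_xX\cong\RR^{2n}$ then the volume-cone/$\ve$-regularity theorem would make $x$ a smooth point of the regular part $M_0\backslash S$, a contradiction. I expect the main obstacle to be exactly the step forced by dropping the crepant resolution: obtaining the uniform $C^0$ estimate and correctly identifying the limit by working directly on the singular variety $M_0$, controlling the singular volume form $\Omega_0\wedge\ov{\Omega_0}$ near $S$ and ruling out any collapsing or concentration of mass on $S$ in the limit. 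Once these uniform estimates are secured, the geometric conclusions follow from the now-standard Cheeger--Colding--Tian machinery.
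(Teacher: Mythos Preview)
Your proposal misidentifies where the crepant-resolution hypothesis was actually used in \cite{RoZ}, and consequently misses the real content of the present paper. The uniform $C^{0}$ estimate on $\vp_{t}$, the diameter bound, and the $C^{\infty}_{\mathrm{loc}}$ convergence $\ti g_{t}\to g$ on $M_{0}\backslash S$ were all established in \cite{RoZ} \emph{without} assuming a crepant resolution (they are Lemma~3.1, Lemma~3.2, Theorem~1.4 and the surrounding arguments there, and this paper simply quotes them). The resolution was needed in \cite{RoZ} only at the final stage: identifying the Gromov--Hausdorff limit $X$ with the metric completion of $(M_{0}\backslash S,d_{g})$.

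That identification is exactly where your outline has a gap. From $C^{\infty}_{\mathrm{loc}}$ convergence one obtains only a local isometric embedding $\Psi:(M_{0}\backslash S,g)\hookrightarrow X$ landing in the Cheeger--Colding regular set $X\backslash S_{X}$; one does \emph{not} get surjectivity onto $X\backslash S_{X}$ for free. A ``capacity/measure estimate'' showing that $X\backslash\Psi(M_{0}\backslash S)$ has zero volume does not exclude that this closed set meets $X\backslash S_{X}$ in a positive-dimensional complex subvariety; intuitively, the limit might have performed a partial resolution of $S$, creating new smooth points over the singularities. The paper's actual argument is different and is the new idea: one passes the projective embeddings $f_{t}:M_{t}\hookrightarrow\mathbb{CP}^{N}$ to the limit (they are uniformly Lipschitz by $\omega_{t}\leq C\ti\omega_{t}$ and harmonic, hence converge to a continuous map $f_{\infty}:X\to M_{0}\subset\mathbb{CP}^{N}$, holomorphic on $X\backslash S_{X}$, with $f_{\infty}\circ\Psi=\mathrm{id}$), and then shows that the exceptional set $E=(X\backslash S_{X})\backslash\Psi(M_{0}\backslash S)$ is empty. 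If $E\neq\emptyset$ then some fiber $E_{y}=f_{\infty}^{-1}(y)\cap(X\backslash S_{X})$, $y\in S$, is a positive-dimensional connected subvariety (Zariski's main theorem plus Hartogs rule out the zero-dimensional case). But the limiting potential satisfies $\vp_{\infty}=\vp_{0}\circ f_{\infty}$, hence is constant along $E_{y}$, forcing $\omega_{\infty}|_{E_{y}}=f_{\infty}^{*}\omega_{0}|_{E_{y}}+\ddbar\vp_{\infty}|_{E_{y}}=0$, which contradicts $\omega_{\infty}$ being K\"ahler. This holomorphic-limit-map/Zariski argument is the heart of the paper and is absent from your plan.
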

\vskip 3mm

 In \cite{RoZ},  Theorem 1.1  is proved under an additional  assumption  that $M_{0}$ admits a crepant resolution, i.e. a resolution $\bar{\pi}: \bar{M}\rightarrow M_{0}$ with $\bar{\pi}^{*} \mathcal{K}_{M_{0}}=\mathcal{K}_{\bar{M}}$.  However, many Calabi-Yau varieties  do not admit any crepant resolution, e.g. Calabi-Yau varieties of dimension 4 with only finite ordinary double points as singularities.

In  \cite{RoZ}, it is proved under the hypothesis of Theorem    \ref{t1.1} that there is a uniform upper bound $D>0$ for the diameter  of $(M_{t},\tilde{g}_{t})$, i.e.   $$ {\rm diam}_{\tilde{g}_{t}}(M_{t})\leq D,  $$ and furthermore, $ F_{t}^{*}\tilde{g}_{t} $ converges to a Ricci-flat
  K\"{a}hler metric $g$
 in the local  $C^{\infty}$-sense on   $
M_{0}\backslash S$, when $t\rightarrow 0$,   where $F_{t}: M_{0}\backslash S\rightarrow M_{t}$ is a  smooth family of embeddings  with $F_{0}={\rm id}$. However,   very little is known about  the metric behavior  near singularities, and the global convergence.   The   Gromov pre-compactness theorem (cf.  \cite{G1}) asserts that, for any sequence $t_{k}\rightarrow 0$,  there is a subsequence $(M_{t_{k}}, \tilde{g}_{t_{k}})$ converging to a   compact length metric space $(X, d_{X})$ in the Gromov-Hausdorff sense. Moreover,  the structure of the limit metric space  $X$ is studied by Cheeger Colding and Tian (cf.  \cite{CC1} \cite{CC2} \cite{CT}), and it is shown that there is a closed subset $S_{X}\subset X$ of Hausdorff dimension less or equal to $2n-4$ such that any tangent cone $T_{x}X$, $x\in S_{X}$, is not $\mathbb{R}^{2n}$ and $X\backslash S_{X}$ is a smooth  manifold admitting a Ricci-flat K\"{a}hler metric $g_{\infty}$.  These results  will play a role  in the proof of Theorem 1.1.

 In Theorem  \ref{t1.1}, we obtain  that  $(X, d_{X})$ is isometric to the metric completion of  $(M_{0}\backslash S,d_{g})$, and the Gromov-Hausdorff convergence takes place for the whole continuous parameter $t\in
\Delta\backslash \{0\}$, i.e.  the Gromov-Hausdorff limit $X$ is unique and  thus  is  independent of the  choice of subsequences of $t$.  Furthermore, $(X\backslash S_{X}, g_{\infty})$ is isometric to $ (M_{0}\backslash S, g)$.

Finally, we remark that  after this paper is completed, we notice that Theorem 1.2 in  a  preprint   \cite{DS} posted  a few days ago  (see also  \cite{Ti1}) implies that  $X$ is homeomorphic to $M_{0}$.  We also mention that the homeomorphism  property    was  proved earlier   for  K3 surfaces in  \cite{Kob1}, and for  Calabi-Yau threefolds with only finite ordinary double points as singularities in \cite{Song} by assuming the existence of crepant resolutions.

\vskip 7mm

\noindent {\bf Acknowledgement:}    The second
 author  would like to  thank Professor  Jian Song  for helpful discussions, and Henri Guenancia for pointing out an error in the early version  of this paper.

 \vskip 10mm

\section{Proof of Theorem \ref{t1.1}}\vskip 2mm

Let  $(\mathcal{M}, \pi)$ be a degeneration of Calabi-Yau manifolds with trivial relative canonical bundle $\mathcal{K}_{\mathcal{M}/\Delta}$, and let $\mathcal{L}$ be an ample line bundle on $ \mathcal{M}$.
 Then  there is an embedding
$\mathcal{M}\hookrightarrow \mathbb{CP}^{N}\times \Delta$ such that
$ \mathcal{L}^{m}=\mathcal{O}_{\Delta}(1)|_{\mathcal{M}}$ for some
$m\geq 1$,  $\pi$ is a proper surjection given by
   the restriction of
 the projection from $\mathbb{CP}^{N}\times \Delta $ to $\Delta$, and
     the rank of $\pi_{*}$ is 1 on $
\mathcal{M}\backslash S$.
 Denote
$$\omega_{t}=\frac{1}{m}\omega_{FS}|_{M_{t}},$$
 where $\omega_{FS}$ is the standard
Fubini-Study metric on $\mathbb{CP}^{N}$, and  $g_{t}$ is  the
corresponding K\"{a}hler  metric of $ \omega_{t} $.  Let $ \Omega_{t}$ be a relative
holomorphic volume form, i.e.,     a  nowhere vanishing section of $
\mathcal{K}_{\mathcal{M}/\Delta}$.  Yau's theorem  of Calabi's
conjecture (\cite{Ya1})  asserts that    there is a unique
Ricci-flat K\"{a}hler  metric  $\tilde{g}_{t}$  with K\"{a}hler form
$\tilde{\omega}_{t}\in [\omega_{t}]= c_{1}(\mathcal{L})|_{M_{t}}\in
H^{1,1}(M_{t}, \mathbb{R})$ for $t\in \Delta\backslash \{0\}$, i.e.,
there is a   unique function $\varphi_{t}$ on $M_{t}$ satisfying
that $\tilde{\omega}_{t}=\omega_{t}+
\sqrt{-1}\partial\overline{\partial}\varphi_{t}$, and
\begin{equation}\label{e51.3}(\omega_{t}+
\sqrt{-1}\partial\overline{\partial}\varphi_{t})^{n}=(-1)^{\frac{n^{2}}{2}}e^{\sigma_{t}}\Omega_{t}\wedge
\overline{\Omega}_{ t},
 \ \ {\rm with} \ \ \sup_{M_{t}}\varphi_{t} =0
\end{equation} where    $\sigma_{t}$ is a constant depending only on $t$. By Lemma 3.1 in  \cite{RoZ}, \begin{equation}\label{eq01}
\|\varphi_{t}\|_{C^{0}}\leq \bar{C},
\end{equation} for a constant $\bar{C}$ independent of $t$, and
 by Lemma 3.2 in  \cite{RoZ},
\begin{equation}\label{eq1}
\omega_{t}\leq C \tilde{\omega}_{t},
\end{equation} for a constant $C>0$ independent of $t$.

Denote
 $f_{t}: (M_{t}, \tilde{\omega}_{t}) \longrightarrow (M_{t}, \omega_{t})\subset (\mathbb{CP}^{N},
\frac{1}{m}\omega_{FS})$  the inclusion   map induced by
$\mathcal{M}\subset \mathbb{CP}^{N}\times\Delta $. Note that
 $f_{t}$ is holomorphic, i.e. $\overline{\partial}f_{t}= 0$, $f_{t}^{*}\frac{1}{m}\omega_{FS}=\omega_{t} $  and   \begin{equation}\label{eq2} |d
f_{t}|^{2}_{\tilde{g}_{t},g_{FS}}=2 |\partial
f_{t}|^{2}_{\tilde{\omega}_{t},\omega_{FS}}=2{\rm
tr}_{\tilde{\omega}_{t}}f_{t}^{*}\omega_{FS}=2m{\rm
tr}_{\tilde{\omega}_{t}}\omega_{t}\leq C \end{equation} on $M_{t}$. Here the last inequality of
 (\ref{eq2})  follows from (\ref{eq1}).   So $f_{t}$ is a family of Lipschitz maps with a uniform
  Lipschitz constant independent of $t$.

By Theorem 1.4 in \cite{RoZ}, the diameter of  $(M_{t}, \tilde{g}_{t})$ is
uniformly bounded, i.e. $${\rm diam}_{\tilde{g}_{t}}(M_{t})\leq D,$$ for a constant $D$ indpendent of $t\in\Delta\backslash \{0\}$.  Furthermore,   for any smooth family of embeddings $F_{t}: M_{0}\backslash S\rightarrow M_{t}$ with $F_{0}={\rm id}$, $$ F_{t}^{*}\tilde{g}_{t} \longrightarrow  g, \ \ F_{t}^{*}\tilde{\omega}_{t} \longrightarrow  \omega, \  \ \  \varphi_{t}\circ F_{t} \longrightarrow  \varphi_{0} ,$$ $    {\rm
when} \ \
 t \rightarrow0,$
 in the $C^{\infty}$-sense on  any compact subset $K \subset
M_{0}\backslash S$,  where $\varphi_{0} $ is a smooth function,  $\omega$ is a Ricci-flat  K\"{a}hler form  on $M_{0}\backslash S$ obtained in \cite{EGZ}, and $g$ is   the corresponding K\"{a}hler metric  of $\omega$, i.e.  $\omega=\omega_{0}+\sqrt{-1}\partial \overline{\partial} \varphi_{0}$ and $$(\omega_{0}+
\sqrt{-1}\partial\overline{\partial}\varphi_{0})^{n}=(-1)^{\frac{n^{2}}{2}}e^{\sigma_{0}}\Omega_{0}\wedge
\overline{\Omega}_{ 0},$$ on $M_{0}\backslash S$.  By \cite{ZZ2},   $\varphi_{0} $ extends to a continuous function on $M_{0}$, still   denoted  by $\varphi_{0} $.

  \vskip 2mm

  \begin{proof}[Proof of Theorem \ref{t1.1}]
  By Gromov compactness theorem (cf.  \cite{G1}), there is a subsequence $(M_{t_{k}}, \tilde{g}_{t_{k}})$ converging to a   compact length metric space $(X, d_{X})$  in the Gromov-Hausdorff sense, i.e.  there  exists a $\epsilon (k) $-approximation  $\phi_{k}: M_{t_{k}}\rightarrow X$ where $\epsilon (k)\rightarrow 0$ when $k\rightarrow\infty$.

  Recall that the convergence for Riemannian manifolds with Ricci-curvature bounded from below was intensively studied by Cheeger and Colding (cf. \cite{CC1} \cite{CC2} etc.).
 Applying  Section 6 and 7 of \cite{CC1} to our circumstances,  we conclude that   there is a closed subset $S_{X}\subset X$ of Hausdorff
  dimension $\dim_{\mathcal{H}}S_{X} \leq 2n-2$ such  that, for any $x\in  S_{X}$, any  tangent cone $T_{x}X$ is not isometric to $\mathbb{R}^{2n}$.
   Furthermore, $X\backslash S_{X}$ is a smooth open  complex manifold,  and $d_{X}|_{X\backslash S_{X}}$ is induced
  by a Ricci-flat K\"{a}hler metric $g_{\infty}$ on $X\backslash S_{X}
  $.     By Section 7 of \cite{CC1},   $\tilde{g}_{t_{k}}$ smoothly converges to $g_{\infty}$ on $X\backslash S_{X}
  $ under the Gromov-Hausdorff convergence of $(M_{t_{k}}, \tilde{g}_{t_{k}})$ to $(X, d_{X})$, i.e.,  there are   compact subsets   $K_{1}\subset \cdots \subset K_{k} \subset K_{k+1}\subset \cdots \subset X\backslash S_{X}$ such that $\bigcup_{k=1}^{\infty} K_{k}= X\backslash S_{X}$, $\phi_{k}|_{\phi_{k}^{-1}(K_{k})}: \phi_{k}^{-1}(K_{k}) \rightarrow K_{k}$ can be chosen as  diffeomorphisms  and $$\phi_{k}^{-1}|_{K_{k}}^{*}\tilde{g}_{t_{k}}\longrightarrow g_{\infty}, \ \ \ \phi_{k,*}J_{t_{k}}\phi^{-1}_{k,*}\longrightarrow J_{\infty}$$ on any compact subset $K\subset\subset X\backslash S_{X}
  $ in the $C^{\infty}$-sense, where    $J_{t_{k}}$ (resp. $J_{\infty}$) denotes   the complex structure of $M_{t_{k}}$ (resp. $X\backslash S_{X}$).

  The same argument as in the proof of Lemma 4.1 in   \cite{RZ} implies that,
by passing to a subsequence,   $\phi_{k}\circ F_{t_{k}}: M_{0}\backslash S \rightarrow X$ converges to a local isometric embedding $\Psi: (M_{0}\backslash S, g)\rightarrow (X, d_{X})$, i.e. for a  $y\in M_{0}\backslash S$, there is a $\rho_{y}>0$ such that $d_{X}(\Psi(y_{1}), \Psi(y_{2}))=d_{g}(y_{1}, y_{2})$ for any $y_{1}, y_{2}\in B_{g}(y, \rho_{y})$.  Moreover, for any $y\in M_{0}\backslash S$, $F_{t_{k}}(y)\in M_{t_{k}}$ converges to a point $x\in X$ under the Gromov-Hausdorff convergence of $(M_{t_{k}}, \tilde{g}_{t_{k}})$ to $(X, d_{X})$, and $\Psi (y)=x$.

\vskip 2mm
\begin{lemma}\label{c1} A subsequence of    $f_{t_{k}}: M_{t_{k}} \rightarrow \mathbb{CP}^{N}$ converges to a continuous map  $f_{\infty}: X\rightarrow \mathbb{CP}^{N}$ which satisfies that $f_{\infty}( X)=M_{0}$,   $f_{\infty}\circ \Psi={\rm id}: M_{0}\backslash S \rightarrow M_{0}\backslash S $, and  $f_{\infty}|_{X\backslash S_{X}}$ is holomorphic. Furthermore,  $$ f_{t_{k}}\circ \phi_{k}^{-1}|_{K} \longrightarrow f_{\infty}|_{K},$$ in the $C^{\infty}$-sense on any compact subset $K\subset X\backslash S_{X}$.
\end{lemma}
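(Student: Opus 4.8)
The plan is to combine two independent sources of compactness: the uniform Lipschitz bound (\ref{eq2}) for the holomorphic maps $f_{t_k}$, which yields a \emph{continuous} limit on all of $X$ via a Gromov--Hausdorff version of the Arzel\`a--Ascoli theorem; and the smooth convergence $\tilde g_{t_k}\to g_\infty$ on $X\backslash S_X$ together with the holomorphicity of $f_{t_k}$, which upgrades this to $C^\infty$ convergence to a holomorphic limit on the regular part. Identifying the limit with $M_0$ and with the inverse of $\Psi$ is then carried out using the closedness of $\mathcal{M}\subset\mathbb{CP}^N\times\Delta$.

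First I would construct $f_\infty$ as a continuous map. Since $(\mathbb{CP}^N,g_{FS})$ is compact and each $f_{t_k}$ is $\sqrt C$-Lipschitz from $(M_{t_k},\tilde g_{t_k})$ into $(\mathbb{CP}^N,g_{FS})$ by (\ref{eq2}), I define, for $x\in X$, $f_\infty(x)=\lim_k f_{t_k}(x_k)$ for any sequence $x_k\in M_{t_k}$ with $\phi_k(x_k)\to x$. The limit exists after passing to a subsequence by compactness of $\mathbb{CP}^N$, and the Lipschitz estimate shows it is independent of the chosen $x_k$ (two choices satisfy $d_{\tilde g_{t_k}}(x_k,x_k')\to 0$, hence $d_{FS}(f_{t_k}(x_k),f_{t_k}(x_k'))\to 0$), so $f_\infty$ is well defined. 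The same estimate, applied to $x,x'\in X$ realized by $x_k,x_k'\in M_{t_k}$ with $d_{\tilde g_{t_k}}(x_k,x_k')\to d_X(x,x')$, gives $d_{FS}(f_\infty(x),f_\infty(x'))\le \sqrt C\, d_X(x,x')$, so $f_\infty$ is Lipschitz, in particular continuous, on all of $X$. A diagonal argument over a countable dense subset of $X$ fixes the subsequence of $\{t_k\}$.

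Next I would establish the $C^\infty$ convergence on $X\backslash S_X$. On each $K_k$ I pull back by the diffeomorphisms $\phi_k^{-1}|_{K_k}$ and set $\hat f_k = f_{t_k}\circ\phi_k^{-1}|_{K_k}: K_k\to \mathbb{CP}^N$. These maps are holomorphic for the complex structures $\phi_{k,*}J_{t_k}\phi_{k,*}^{-1}$, which converge in $C^\infty$ to $J_\infty$, and (\ref{eq2}) bounds their first derivatives with respect to the smoothly convergent metrics $\phi_k^{-1}|_{K_k}^{*}\tilde g_{t_k}\to g_\infty$. Since $\bar\partial_{J_\infty}\hat f_k = (\bar\partial_{J_\infty}-\bar\partial_{J_{t_k}})\hat f_k$ is controlled by $|d\hat f_k|\,|J_\infty-J_{t_k}|$, the maps $\hat f_k$ solve a first-order elliptic system with smoothly convergent coefficients; starting from the $C^{0,1}$ bound I bootstrap via interior elliptic (Schauder) estimates for the $\bar\partial$-equation to uniform $C^\ell$ bounds on compact subsets of the interior of each $K_k$. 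Arzel\`a--Ascoli and an exhaustion/diagonal argument over $\bigcup_k K_k = X\backslash S_X$ then give $C^\infty$ convergence of a further subsequence, and since $\bar\partial_{J_\infty}\hat f_k\to 0$ the limit is $J_\infty$-holomorphic. This limit agrees with the $f_\infty$ of the previous step, proving the displayed $C^\infty$ convergence and that $f_\infty|_{X\backslash S_X}$ is holomorphic.

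Finally I would identify $f_\infty$. For $f_\infty\circ\Psi=\mathrm{id}$: given $y\in M_0\backslash S$ we have $\Psi(y)=\lim_k\phi_k(F_{t_k}(y))$, so $f_\infty(\Psi(y))=\lim_k f_{t_k}(F_{t_k}(y))$; writing $F_t(y)=(\psi_t(y),t)\in\mathbb{CP}^N\times\Delta$ with $\psi_0=\mathrm{id}$ and using that $F_t$ is a smooth family with $F_0=\mathrm{id}$, one gets $f_{t_k}(F_{t_k}(y))=\psi_{t_k}(y)\to y$, hence $f_\infty\circ\Psi=\mathrm{id}$ on $M_0\backslash S$. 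For the image: since $\mathcal{M}\subset\mathbb{CP}^N\times\Delta$ is closed, any $x\in X$ with $\phi_k(x_k)\to x$ has $(f_{t_k}(x_k),t_k)\in\mathcal{M}$ converging to $(f_\infty(x),0)\in\mathcal{M}$, so $f_\infty(x)\in M_0$ and $f_\infty(X)\subset M_0$; conversely $f_\infty(X)\supset f_\infty(\Psi(M_0\backslash S))=M_0\backslash S$, and $f_\infty(X)$ being compact (hence closed) together with $M_0$ being the closure of $M_0\backslash S$ yields $M_0\subset f_\infty(X)$. The main obstacle is the $C^\infty$/holomorphicity step: the uniform bound (\ref{eq2}) controls only first derivatives, so turning it into genuine $C^\infty$ compactness with a holomorphic limit requires carefully exploiting holomorphicity and the $C^\infty$ convergence of $(J_{t_k},\tilde g_{t_k})$ to $(J_\infty,g_\infty)$ via elliptic bootstrapping, rather than Lipschitz bounds alone.
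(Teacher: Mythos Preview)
Your proof is correct and follows essentially the same route as the paper: the uniform Lipschitz bound (\ref{eq2}) together with a Gromov--Hausdorff Arzel\`a--Ascoli argument produces the continuous limit $f_\infty$, elliptic bootstrapping on the regular part upgrades this to $C^\infty$ convergence with a holomorphic limit, and the identification of $f_\infty\circ\Psi$ and of the image is the same as the paper's. The only minor difference is that the paper runs the bootstrap through the second-order harmonic-map equation (holomorphic $\Rightarrow$ harmonic, so $\Delta_{\tilde g_{t_k}} f_{t_k}^i + \tilde g_{t_k}^{\alpha\beta}\Gamma^i_{jl}\,\partial_\alpha f_{t_k}^j\,\partial_\beta f_{t_k}^l = 0$ with right-hand side bounded by (\ref{eq2})), whereas you work directly with the first-order $\bar\partial$-system; both yield the same $C^{1,\alpha}$-then-$C^\ell$ iteration.
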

\vskip 2mm

  \begin{proof}
By  (\ref{eq2}),   $f_{t_{k}}$ are  Lipschitz maps with a uniform
  Lipschitz constant independent of $k$. Hence,  passing to a subsequence,    $f_{t_{k}}: M_{t_{k}} \rightarrow \mathbb{CP}^{N}$ converges to a continuous map  $f_{\infty}: X\rightarrow \mathbb{CP}^{N}$ when $t_{k}\rightarrow 0$ (cf. \cite{Rong}), i.e. for any sequence $p_{k}\in M_{t_{k}}$ which converges to $x\in X$ under the Gromov-Hausdorff convergence of $(M_{t_{k}}, \tilde{g}_{t_{k}})$ to $(X, d_{X})$,  we have that  $f_{t_{k}}(p_{k})$ converges to $f_{\infty}(x)$ in $\mathbb{CP}^{n}$.   From that  $f_{t_{k}}(M_{t_{k}})=M_{t_{k}}\subset \mathbb{CP}^{N}$, we see that  $f_{\infty}( X)=M_{0}\subset \mathbb{CP}^{N}$.  Note that  for any $y\in M_{0}\backslash S$, $F_{t_{k}}(y)\in M_{t_{k}}$ converges to a point $x\in X$ under the Gromov-Hausdorff convergence of $(M_{t_{k}}, \tilde{g}_{t_{k}})$ to $(X, d_{X})$, and $F_{t_{k}}(y)$ converges to $y$ in $\mathbb{CP}^{N}$.  By the convergence of $f_{t_{k}}$, $f_{t_{k}}(F_{t_{k}}(y))=F_{t_{k}}(y)$ and $F_{0}={\rm id}$,  we have that  $f_{\infty}(x)=y$.  Thus
  $f_{\infty}\circ \Psi={\rm id}: M_{0}\backslash S \rightarrow M_{0}\backslash S $.

 We claim that $f_{\infty}|_{X\backslash S_{X}}$ is holomorphic.  Note that   $f_{t_{k}}: M_{t_{k}} \longrightarrow \mathbb{CP}^{N}$ is holomorphic, i.e. $$\overline{\partial}f_{t_{k}}=\frac{1}{2}(df_{t_{k}}+J\circ df_{t_{k}} \circ J_{t_{k}})= 0, $$ where $J$ denotes the complex structure of $\mathbb{CP}^{N}$.
   Then $f_{t_{k}}: (M_{t_{k}}, \tilde{g}_{t_{k}}) \longrightarrow (\mathbb{CP}^{N}, g_{FS})$ is a harmonic map, and  by (\ref{eq2})  $$|d
f_{t_{k}}|^{2}_{\tilde{g}_{t_{k}},g_{FS}}\leq C $$ for a constant $C>0$ independent of $k$.
   In local  coordinates on $M_{t_{k}}$ and $\mathbb{CP}^{N}$ we have $$\Delta_{\tilde{g}_{t_{k}}}f_{t_{k}}^{i}+\tilde{g}_{t_{k}}^{\alpha\beta}\Gamma^{i}_{jl}\frac{\partial f_{t_{k}}^{j}}{\partial x^{\alpha}}\frac{\partial f_{t_{k}}^{l}}{\partial x^{\beta}}=0,$$ where $\Gamma^{i}_{jl}$ denotes  the Christoffel symbols of $g_{FS}$ (cf. \cite{Sc}).  On any compact subset $K\subset\subset X\backslash S_{X}
  $,  $\|f_{t_{k}}^{i}\|_{C^{1,\alpha}}\leq C_{K}$ for a constant $C_{K}>0$ independent of $k$ by  the standard Schauder estimates (cf    \cite{GT}),
    since $\phi_{k}^{-1}|_{K_{k}}^{*}\tilde{g}_{t_{k}}$   converges to $ g_{\infty}$ in the $C^{\infty}$-sense, and
        $|\frac{\partial f_{t_{k}}^{j}}{\partial x^{\alpha}}|_{C^{0}}\leq C$.   Here we identify $K$ and $\phi_{k}^{-1}(K)$ for $k\gg 1$.  The  elliptic bootstrap   estimates show that  $\|f_{t_{k}}^{i}\|_{C^{l}}\leq C_{l,K}$ for any $l\in \mathbb{N}$ on $K$. Hence,   passing a subsequence,  $f_{t_{k}}$  converges to $f_{\infty}$ smoothly on $K$, and, by the convergence of $J_{t_{k}}$ to $J_{\infty}$,  $f_{\infty}$ is holomorphic, i.e. $$\overline{\partial}f_{\infty}=\frac{1}{2}(df_{\infty}+J\circ df_{\infty} \circ J_{\infty})= 0. $$ By the standard  diagonal argument, $f_{\infty}$ is holomorphic on $X\backslash S_{X}$.  \end{proof}

       For any $x_{0}\in \Psi(M_{0}\backslash S)$, the tangent cone $T_{x_{0}}X\cong \mathbb{R}^{2n}$ since $\Psi$ is a local isometry. Thus  $\Psi(M_{0}\backslash S)\subset X\backslash S_{X}$.     By the volume convergence theorem in \cite{CC2},    we have $$ \mathcal{H}^{2n}(X,d_{X})= {\rm Vol}_{\tilde{g}_{t_{k}}}(M_{t_{k}})=\frac{1}{n!}c_{1}(\mathcal{L}|_{M_{t_{k}}})^{n}=\frac{1}{n!}c_{1}(\mathcal{L}|_{M_{0}})^{n}= {\rm Vol}_{g}(M_{0}\backslash S), $$  where $\mathcal{H}^{2n}(X,d_{X})$ denotes the Hausdorff  measure  of $(X, d_{X})$, and thus $${\rm Vol}_{g_{\infty}}(X \backslash S_{X}) =  \mathcal{H}^{2n}(X,d_{X})= {\rm Vol}_{g_{\infty}}(\Psi(M_{0}\backslash S)),$$    which implies that $\Psi(M_{0}\backslash S)$ is dense in $(X,d_{X})$.  
       If we denote  $E=X\backslash (S_{X}\bigcup \Psi(M_{0}\backslash S))$, then we claim that  $f_{\infty}(E)\subset S$.  If there is a $x\in E$ such that $y=f_{\infty}(x)\in M_{0}\backslash S$, then  we take a sequence of points  $x_{i}\in \Psi(M_{0}\backslash S)$ converging  to $x$, i.e.  $x_{i}  \rightarrow  x$ in $X$.   We have a sequence $y_{i} \in M_{0}\backslash S$ such that  $\Psi (y_{i})=x_{i}$, and by $f_{\infty}\circ \Psi={\rm id}$,  $y_{i}=f_{\infty}(x_{i}) $ converges to $y$ in $M_{0}\backslash S$.   Hence $x_{i}$ converges to $\Psi (y) $ in $\Psi(M_{0}\backslash S)$, which is a contradiction.

           Since  $f_{\infty}|_{X\backslash S_{X}}$ is holomorphic, $E=f_{\infty}|_{X\backslash S_{X}}^{-1}(S)$ is a complex subvariety of $(X\backslash S_{X}, J_{\infty})$ with $\dim_{\mathbb{C}} E \leqslant n-1$, and $f_{\infty}|_{X\backslash (S_{X}\cup E)}: X\backslash (S_{X}\cup E)\rightarrow M_{0}\backslash S$ is bi-holomorphic.    Thus $E\bigcup S_{X}=X\backslash  \Psi(M_{0}\backslash S)$ has the  Hausdorff codimension bigger or equal to $2$.  
          By    Section 3 of  \cite{CC2}, we obtain  that  for any $x_{1}, x_{2}\in \Psi(M_{0}\backslash S)$
    and any $\delta >0$, there is  a  curve $\gamma_{\delta}$ connecting $x_{1}$ and $
 x_{2}$ in  $\Psi(M_{0}\backslash S)$ such that $$d_{X}(x_{1},x_{2}) \leq  {\rm length}_{d_{X}}(\gamma_{\delta}) \leq
  \delta
  +d_{X}(x_{1},x_{2}).$$  Thus $(X, d_{X})$ is isometric to the metric completion of   $(M_{0}\backslash S,d_{g}) $.   For any other convergence subsequence $(M_{t'_{k}}, \tilde{g}_{t'_{k}})$, the same argument shows that the limit is still the metric   completion of   $(M_{0}\backslash S,d_{g}) $.  We obtain that $(M_{t}, \tilde{g}_{t})$ converges to $(X, d_{X})$ in the Gromov-Hausdorff sense  when $t  \rightarrow 0$.    \end{proof}

    \vspace{0.7cm}

\end{document}